\newtheorem{thm}{Theorem}
\newtheorem{cor}{Corollary}
\theoremstyle{definition}
\newcommand{\A}{{\mathcal A}}
\newcommand{\U}{{\mathcal U}}
\newcommand{\es}{{\mathcal S}}
\newcommand{\D}{{\mathbb D}}
\newcommand{\real}{{\operatorname{Re}\,}}
\begin{document}
\bibliographystyle{amsplain}

\title[On the difference of the moduli of the two initial logarithmic coefficients]{On the difference of the moduli of the two initial logarithmic coefficients}

\author[M. Obradovi\'{c}]{Milutin Obradovi\'{c}}
\address{Department of Mathematics,
Faculty of Civil Engineering, University of Belgrade,
Bulevar Kralja Aleksandra 73, 11000, Belgrade, Serbia.}
\email{obrad@grf.bg.ac.rs}

\author[N. Tuneski]{Nikola Tuneski}
\address{Department of Mathematics and Informatics, Faculty of Mechanical Engineering, Ss. Cyril and
Methodius
University in Skopje, Karpo\v{s} II b.b., 1000 Skopje, Republic of North Macedonia.}
\email{nikola.tuneski@mf.edu.mk}

\subjclass[2020]{30C45, 30C50}
\keywords{univalent functions, logarithmic coefficient, sharp results, class $\U$, $\alpha$-convex functions, Ozaki close-to-convex functions}

\begin{abstract}
In this paper, we give sharp bounds of the difference of the moduli of the second and the first logarithmic coefficient for the functions on the class $\U$, for the $\alpha$-convex functions, and for the class $\mathcal{G}(\alpha)$ introduced by Ozaki.
\end{abstract}

\maketitle

\section{Introduction and definitions}
Let in the beginning state some necessary definitions from the theory of univalent functions, as well as previous result that motivated the research presented in this paper.

\medskip

So, functions $f$ analytic in the open unit disk $\D=\{z:|z|<1\}$ and normalized such that $f(0)=f'(0)-1=0$, i.e.,
\begin{equation}\label{e1}
  f(z)=z+a_2z^2+a_3z^3+\cdots,
\end{equation}
form class $\A$. The functions from $\A$ that are one-on-one and onto are the well known univalent functions, and the corresponding class is denoted by  $\es$. The famous Bieberbach conjecture from 1916 (\cite{bieber}), stating that for all functions from $\es$, $|a_n|\le n$, $n=2,3,\ldots$,  proven by de Branges in 1985 (\cite{branges}), was one of the most significant results of the twentieth century and motivated massive research in the area. Most of that research was devoted to estimating coefficients of univalent functions and various functionals of coefficients. More details can be found in \cite{duren, book}. In a similar way, so-called logarithmic coefficients, $\gamma_n$, $n=1,2,\ldots$, were also studied. Those coefficients are defined  by
\begin{equation}\label{eq2}
\log\frac{f(z)}{z}=2\sum_{n=1}^\infty \gamma_n z^n.
\end{equation}
Although coefficients $a_n$ are widely studied, for the logarithmic coefficients very little is known. The Koebe function, with logarithmic coefficients are $1/n$, motivates the conjecture  $|\gamma_n|\le1/n$, $n=2,3,\ldots$, but it is false even in order of magnitude (see Duren \cite[Section 8.1]{duren}), and true only for the class of starlike functions (\cite{der}). For the general class $\mathcal{S}$ the sharp estimates of single logarithmic coefficients are known only for $\gamma_1$ and $\gamma_2$, namely,
$$|\gamma_1|\le1\quad\mbox{and}\quad |\gamma_2|\le \frac12+\frac1e=0.635\ldots.$$
Before continuing, let note that from the relations \eqref{e1} and \eqref{eq2}, after  comparing of coefficients, we receive
\begin{equation}\label{eq3}
 \gamma_{1}=\frac{a_{2}}{2}\quad \text{and}\quad \gamma_{2}=\frac{1}{2}\left(a_3-\frac{1}{2}a_{2}^{2}\right).
\end{equation}

\medskip

Another approach of study of logarithmic coefficients is by considering the difference between the moduli of two consecutive coefficients, over the general class $\es$, or over its subclasses.
Thus, the sharp estimates of $|\gamma _{2}|-|\gamma_{1}|$ for the general class $\es$ were given in \cite{lecko}, with a much simpler proof in \cite{OT_2023-3} obtained using different technique.

\bigskip

\noindent
\textbf{Theorem A.}   \textit{For every $f\in\es$, $-\frac{\sqrt2}{2}\leq |\gamma_{2}|-|\gamma_{1}|\leq\frac12$ holds sharply.}

\bigskip

The differences $|\gamma _{3}|-|\gamma_{2}|$ and $|\gamma _{4}|-|\gamma_{3}|$, again for the general class $\es$ were studied in \cite{OT_2023-5}.

\medskip

In this paper we give sharp estimates of the difference $|\gamma_2|-|\gamma_1|$,
for the class $\U(\lambda)$, for the class of $\alpha$-convex functions ($\mathcal{M}(\alpha)$), and for the class $\mathcal{G}(\alpha)$, defined respectively by:
\[
\begin{split}
\U(\lambda) &= \left\{ f\in\A: \left| \left[  \frac{z}{f(z)}\right]^2 f'(z) -1\right|< \lambda,\, z\in\D\right\}, \,0<\lambda\le1; \\[2mm]
\mathcal{M}(\alpha) &= \left\{ f\in\A: \real\left[(1-\alpha)\frac{zf'(z)}{f(z)} + \alpha\left(1+ \frac{zf''(z)}{f'(z)}\right)\right]  >0,\, z\in \D\right\}, \, \alpha\in \mathbb{R};\\[2mm]
\mathcal{G}(\alpha) &= \left\{ f\in\A: \real\left[1+  \frac{zf''(z)}{f'(z)}\right]  < 1+\frac{\alpha}{2},\, z\in\D\right\}, \, 0<\alpha\le1.
\end{split}
\]
All three classes are in the class $\es$ and play significant role in the theory of univalent functions. Class $\U(\lambda)$ does not belong it the class of normalized starlike univalent functions, nor vise versa. Its earliest origins can be traced back to 1959 in the work of Aksent\'{e}v (\cite{aksentev}) and in past decades received considerable attention, mainly through the work of Obradovi\'{c} and Ponnusamy. The class $\mathcal{M}(\alpha)$ was introduced by Mocanu in 1969 (\cite{mocanu-1969}) and makes a bridge between the classes of starlike and convex functions, while class $\mathcal{G}(\alpha)$ was introduced by Ozaki in 1941 (\cite{ozaki}) and consist of functions convex in one direction. Later, in \cite{jov} it was proven that $\mathcal{G}$ is subclass of the class of starlike functions. More about them can be found in \cite{duren,book}.

\medskip

\section{Estimates for the class $\U(\lambda)$}

In this section we will prove sharp bounds for the difference $|\gamma_2|-|\gamma_1|$, using similar methods as in \cite{OT_2023-3}.

\begin{thm}
Let $f\in\U(\lambda)$ for some $0<\lambda\le1$. Then
\begin{itemize}
  \item[$(i)$] $ -\frac{2\lambda+1}{4}\leq |\gamma_{2}|-|\gamma_{1}|\leq\frac{\lambda}{2}$ if $0\le\lambda\le\frac12$;
  \item[$(ii)$] $ -\frac{\sqrt{2\lambda}}{2}\leq |\gamma_{2}|-|\gamma_{1}|\leq\frac{\lambda}{2}$ if $\frac12\le\lambda\le1$.
\end{itemize}
All estimates are sharp.
\end{thm}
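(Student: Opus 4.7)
The plan is to parametrize $f\in\U(\lambda)$ through a Schwarz function together with one free parameter, express both $\gamma_1$ and $\gamma_2$ in these coordinates, and then optimize. Setting $g(z)=z/f(z)=1+\sum_{n\ge 1}b_n z^n$, a direct calculation gives the identity $\bigl(z/f(z)\bigr)^2 f'(z)=g(z)-zg'(z)$, so the defining inequality of $\U(\lambda)$ rewrites as
\[
g(z)-zg'(z)-1=\lambda\omega(z)
\]
for some $\omega$ analytic in $\D$ with $|\omega|<1$. The left side vanishes to order $2$ at the origin, so Schwarz's lemma forces $\omega(z)=z^2\phi(z)$ with $|\phi|\le 1$ on $\D$; matching $z^2$-coefficients yields $b_2=-\lambda c_2$, where $c_2=\phi(0)$, so $|c_2|\le 1$. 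The coefficient $b_1=-a_2$ is free, subject only to the univalence bound $|b_1|\le 2$.

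From \eqref{eq3} together with $\log(f(z)/z)=-\log(1+\sum b_n z^n)$ one reads off $\gamma_1=-b_1/2$ and $\gamma_2=b_1^2/4+\lambda c_2/2$. The upper bound is then immediate from the triangle inequality, $|\gamma_2|-|\gamma_1|\le |b_1|^2/4-|b_1|/2+\lambda/2$, combined with the pointwise fact that $x^2/4-x/2\le 0$ on $[0,2]$; this yields $|\gamma_2|-|\gamma_1|\le\lambda/2$, which is realized by $f(z)=z/(1-\lambda z^2)$ (corresponding to $b_1=0$ and $\omega(z)=z^2$).

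For the lower bound I set $x=|b_1|$ and first minimize over $|c_2|\le 1$: choosing $c_2$ antialigned with $b_1^2$ of modulus $\min\{1,\,x^2/(2\lambda)\}$ gives
\[
|\gamma_2|-|\gamma_1|\ge L(x):=\max\bigl\{0,\,x^2/4-\lambda/2\bigr\}-x/2.
\]
On $[0,\sqrt{2\lambda}]$, $L(x)=-x/2$ is decreasing; on $[\sqrt{2\lambda},2]$, $L(x)=x^2/4-x/2-\lambda/2$ is a parabola with unique critical point at $x=1$ and value $L(1)=-(2\lambda+1)/4$. If $\lambda\le 1/2$, then $\sqrt{2\lambda}\le 1$, so $x=1$ lies in the second piece and $\min L=-(2\lambda+1)/4$; if $\lambda\ge 1/2$, then $\sqrt{2\lambda}\ge 1$, the parabola is increasing on its piece, and $\min L=L(\sqrt{2\lambda})=-\sqrt{2\lambda}/2$. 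Sharpness is exhibited by $f(z)=z/(1+z+\lambda z^2)$ and $f(z)=z/(1+\sqrt{2\lambda}\,z+\lambda z^2)$ in the two regimes; in both cases one checks $(z/f(z))^2 f'(z)-1=-\lambda z^2$, so the functions lie in $\U(\lambda)$, and the quadratic denominators have their zeros outside $\D$.

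The main technical obstacle is the piecewise optimization in the lower bound: one must cleanly separate the regime where the optimal $c_2$ still satisfies $|c_2|=x^2/(2\lambda)\le 1$ (so that $\gamma_2$ can be driven to $0$) from the regime where $|c_2|=1$ is forced, and then splice the two regimes at the threshold $x=\sqrt{2\lambda}$. The phase transition at $\lambda=1/2$ in the statement is precisely the value of $\lambda$ at which the interior critical point $x=1$ of the parabolic piece of $L$ crosses this threshold.
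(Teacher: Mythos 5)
Your proof is correct. At its core it relies on the same two ingredients as the paper's argument---the decomposition $\gamma_2=\frac12\bigl(a_3-a_2^2\bigr)+\frac14a_2^2$ (in your coordinates, $\gamma_2=b_1^2/4-b_2/2$), the bound $|a_3-a_2^2|\le\lambda$, and a case split at $|a_2|=\sqrt{2\lambda}$---so the two proofs are close relatives. The genuine difference is in how the input is obtained and how the optimization is organized: the paper quotes $|a_3-a_2^2|\le\lambda$ and $|a_2|\le 1+\lambda$ from Obradovi\'c--Ponnusamy--Wirths and then runs triangle-inequality manipulations case by case, whereas you derive the constraint $|b_2|\le\lambda$ from scratch via the identity $\bigl(z/f(z)\bigr)^2f'(z)=g(z)-zg'(z)$ and the Schwarz lemma, and then package the entire lower bound as the minimization of one piecewise function $L(x)$. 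Your route buys a self-contained proof and a transparent explanation of the phase transition at $\lambda=\frac12$ (the vertex $x=1$ of the parabolic piece crossing the threshold $x=\sqrt{2\lambda}$); the paper's route is shorter on the page because the key coefficient inequality is imported. Two small points to tidy: the remark that $b_1$ is ``free'' subject to $|b_1|\le 2$ overstates what you need (attainability of all such $b_1$ is neither obvious nor used---only the upper bound $|b_1|\le2$, from $\U(\lambda)\subset\es$, enters, and only in the upper estimate); and the zero-freeness in $\D$ of the denominators $1+z+\lambda z^2$ and $1+\sqrt{2\lambda}\,z+\lambda z^2$ deserves its one-line verification (the relevant roots have moduli $2/\bigl(1+\sqrt{1-4\lambda}\bigr)>1$ for $0<\lambda\le\frac14$, and $1/\sqrt{\lambda}\ge1$ otherwise), exactly as the paper flags for its function $f_5$.
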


Before proceeding with the proof let note that for $\lambda=1$ we receive the same result as in Theorem A for the class $\es$. That is expected since the extremal functions,
\[f_1(z)=\frac{z}{1-\sqrt2 e^{i\theta}z + e^{2i\theta}z^2}\quad \text{and} \quad f_2(z)=\frac{z}{1 + e^{i\theta}z^2},\]
belong to the class $\U\equiv\U(1)$.
On the other hand, that does not hold for other values of $\lambda\in(0,1)$.

\begin{proof}
In the beginning, let recall that for a function $f\in\U(\lambda)$, $0<\lambda\le1$, in \cite{opon} the following was proven to hold sharply:
\[
|a_3-a_2^2|\le \lambda
\]
and
\begin{equation}\label{e-8}
|a_2|\le 1+\lambda.
\end{equation}

Using this facts and \eqref{eq2} we will obtain the right-hand side inequalities in (i) and (ii). Indeed, since $\frac12|a_2|\le \frac12(1+\lambda)\le1$,
\[
\begin{split}
|\gamma_{2}|-|\gamma_{1}| &\leq |\gamma_2|-\frac12|a_2||\gamma_1|\\
&= \frac12\left| a_3-\frac12a_2^2 \right| - \frac14|a_2|^2 \\
&\le \frac12\left[ \left| \left(a_3-\frac12a_2^2\right) - \frac12a_2^2 \right|\right] \\
&= \frac12|a_3-a_2^2|\\
&\le\frac{\lambda}{2}.
\end{split}
\]
The estimate is sharp as the function $f_3(z) = \frac{z}{1-\lambda e^{i\theta} z^2} = z+\lambda e^{i\theta} z^3+\lambda^2e^{2i\theta}z^5+\cdots$ shows.

\medskip

Next, we will prove the left-hand side inequality in (i). Let $\frac12\le\lambda\le1$. Then, the inequality to be prven is equivalent to
\begin{equation}\label{eq-9}
  \left| a_3-\frac12a_2^2 \right| \ge |a_2|-\sqrt{2\lambda}.
\end{equation}
In the case $|a_2|<\sqrt{2\lambda}$, inequality \eqref{eq-9} is obviously true. For the remaining case, $\sqrt{2\lambda}\le |a_2|\le1+\lambda$, from \eqref{e-8} we receive
\[
\begin{split}
\left| a_3-\frac12a_2^2 \right| &= \left| \left(a_3-a_2^2 \right) + \frac12 a_2^2\right| \ge \frac12|a_2|^2 - |a_3-a_2^2|\\
&\ge \frac12|a_2|^2-\lambda \ge |a_2|-\sqrt{2\lambda}.
\end{split}
\]
The last inequality holds since it is equivalent to
\[ \left(|a_2|-\sqrt{2\lambda}\right)\left(|a_2|+\sqrt{2\lambda}-2\right) \ge 0,\]
having in mind that $|a_2|-\sqrt{2\lambda}\ge0$ and $|a_2|+\sqrt{2\lambda}-2\ge 2(\sqrt{2\lambda}-1)\ge0$ (because $\frac12\le\lambda$ implies $\sqrt{2\lambda}\ge1$). The result os sharp with equality attained for the function
\[f_4(z) = \frac{z}{1-\sqrt{2\lambda}z+\lambda z^2} = z+\sqrt{2\lambda} z^2+\lambda z^3+\cdots.\]

\medskip

We will complete the proof by proving the left-hand inequality in (ii). So, for $0<\lambda\le\frac12$, we have
\[
\begin{split}
|\gamma_{2}|-|\gamma_{1}| &= \frac12\left| a_3-\frac12a_2^2 \right| - \frac12|a_2| \\
&= \frac12 \left| \left(a_3-a_2^2\right) + \frac12a_2^2 \right| - \frac12|a_2|\\
&\ge \frac14|a_2|^2 - \frac12\left| a_3-a_2^2\right| - \frac12|a_2|\\
&\ge \frac14\left( |a_2|^2-2|a_2|-2\lambda \right)\\
&\ge -\frac{2\lambda+1}{4}.
\end{split}
\]
The result is sharp with extremal function
\[ f_5(z) = \frac{z}{1-z+\lambda z^2} = z+z^2+(1-\lambda)z^3+\cdots \]
which, for $0<\lambda\le\frac12$ belongs to $\U(\lambda)$ since in that case $1-z+\lambda z^2\neq0$ for all $z$ in $\D$. The last should be verified separately for $0<\lambda\le\frac14$ and $\frac14<\lambda\le\frac12$.
\end{proof}

\medskip

\section{Estimates for $\alpha$-convex functions}

In this section we will study the case of $\alpha$-convex functions.
In the next two theorems we give some basic properties of the classes (see \cite{SMO_1973-1,SMO_1974-2,MBS_1999-1,book}).

\begin{thm}
For the classes $\mathcal{M}(\alpha)$ the next relations are true:
\begin{itemize}
  \item[$(i)$] $\mathcal{M}(\alpha)\subset \mathcal{S}$ for all real $\alpha$;
  \item[$(ii)$] $\mathcal{M}(\alpha)\subset\mathcal{M}(\beta)\subset\mathcal{M}(0)=\es^*$ for $0<\frac{\alpha}{\beta}<1$;
  \item[$(iii)$]  $\mathcal{M}(\alpha)\subset \mathcal{M}(1)=\mathcal{K}$ for $\alpha>  1$.
\end{itemize}
\end{thm}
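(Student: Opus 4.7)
Set $p(z) := zf'(z)/f(z)$, analytic in $\D$ with $p(0) = 1$. Logarithmic differentiation gives the identity $1 + zf''(z)/f'(z) = p(z) + zp'(z)/p(z)$, so the $\alpha$-convex functional rewrites as
\[
J(\alpha,f;z) := (1-\alpha)\frac{zf'(z)}{f(z)} + \alpha\left(1+\frac{zf''(z)}{f'(z)}\right) = p(z) + \alpha\,\frac{zp'(z)}{p(z)}.
\]

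For (i), the plan is to show $\real p(z) > 0$ throughout $\D$, which gives starlikeness and hence univalence for every real $\alpha$. Suppose this fails. Since $p(0) = 1$, by continuity there is a first point $z_0 \in \D$ with $p(z_0) = ia$, $a \neq 0$, and $\real p > 0$ on $\{|z| < |z_0|\}$. Applying the Jack--Miller--Mocanu lemma to $w := (1-p)/(1+p)$, which has modulus less than $1$ on the inner disk and modulus $1$ at $z_0$, yields $z_0 p'(z_0) = -k(1+a^2)/2$ for some real $k \ge 1$. Consequently $z_0 p'(z_0)/p(z_0)$ is purely imaginary, so $\real J(\alpha,f;z_0) = \real p(z_0) = 0$, contradicting the strict inequality $\real J(\alpha,f;z) > 0$ on $\D$.

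Parts (ii) and (iii) are then algebraic consequences. Writing $u := zf'/f$ and $v := 1 + zf''/f'$ so that $J(\alpha) = (1-\alpha)u + \alpha v$, a linear elimination yields
\[
J(\beta) = \Big(1-\tfrac{\beta}{\alpha}\Big)u + \tfrac{\beta}{\alpha}\,J(\alpha), \qquad v = \tfrac{1}{\alpha}\,J(\alpha) + \tfrac{\alpha-1}{\alpha}\,u.
\]
By (i), whenever $f \in \mathcal{M}(\alpha)$ we have $\real u > 0$, and $\real J(\alpha) > 0$ by hypothesis. For (ii), when $\beta/\alpha \in (0,1)$ both coefficients of the first identity are positive, exhibiting $J(\beta)$ as a positive convex combination of two functions with positive real part; hence $\real J(\beta) > 0$ and $f \in \mathcal{M}(\beta)$, while the boundary case $\beta = 0$ gives $\mathcal{M}(\beta) \subset \mathcal{M}(0) = \es^*$. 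For (iii), when $\alpha > 1$ both coefficients $1/\alpha$ and $(\alpha-1)/\alpha$ in the second identity are positive, so $\real v > 0$ and $f \in \K = \mathcal{M}(1)$.

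The main obstacle is (i); once starlikeness is in hand, (ii) and (iii) reduce to one-line convex-combination observations. The delicate step in (i) is the rigorous application of the first-touch lemma to a functional involving $zp'/p$, including verification that $p$ is analytic on a closed sub-disk containing $z_0$ and that the inner-disk localization is valid, but this is standard in the Miller--Mocanu theory of differential subordinations and works uniformly in $\alpha \in \mathbb{R}$.
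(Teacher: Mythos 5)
The paper itself offers no proof of this theorem: it is stated as a collection of known facts with citations to Miller--Mocanu--Reade and the book of Mocanu--Bulboac\u{a}--S\u{a}l\u{a}gean, so there is nothing internal to compare against. Your argument is precisely the classical one from those sources, and it is essentially correct: writing $J=p+\alpha zp'/p$ with $p=zf'/f$ and running the Jack--Miller--Mocanu first-touch argument on $p$ shows $\real p>0$ in $\D$ for every real $\alpha$ (the key point being that $z_0p'(z_0)=-k(1+a^2)/2$ is real negative while $p(z_0)=ia$, so $z_0p'(z_0)/p(z_0)$ is purely imaginary and $\real J(z_0)=0$ independently of the sign of $\alpha$); this gives starlikeness, which is stronger than the stated $\mathcal{M}(\alpha)\subset\es$. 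The two linear identities for parts (ii) and (iii) check out, and the convex-combination conclusion is sound.

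Two caveats are worth recording. First, for $p$ to be analytic and zero-free (so that $a\neq 0$ at the touch point and $zp'/p$ makes sense) one needs the standing hypothesis $f(z)f'(z)/z\neq 0$ in $\D$, which is part of the classical definition of $\mathcal{M}(\alpha)$ but is not written into the paper's definition; you flag this as "standard" rather than verifying it, which is acceptable but should be stated as an explicit hypothesis. Second, and more substantively, your part (ii) is proved under the condition $\beta/\alpha\in(0,1)$, whereas the theorem as printed requires $0<\alpha/\beta<1$. Your identity
\[
J(\beta)=\Bigl(1-\tfrac{\beta}{\alpha}\Bigr)\frac{zf'}{f}+\tfrac{\beta}{\alpha}J(\alpha)
\]
yields positive coefficients exactly when $0<\beta/\alpha<1$, which is the condition in the original Miller--Mocanu--Reade result (and the only one consistent with (iii) and with $\K\subset\es^*$); the printed $\alpha/\beta$ appears to be a typo in the paper, e.g.\ it would assert $\mathcal{M}(1/2)\subset\K$, which is false. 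So you have proved the correct statement, not the literal one; say so explicitly.
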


Here $\mathcal{S}^*\equiv \mathcal{M}(0)$ and $\mathcal{K}\equiv \mathcal{M}(1)$ are the well-known classes of starlike and convex functions, mapping the unit disk onto a starlike or convex domain, respectively.

\medskip

For the class of $\alpha$-convex functions we also have the following

\begin{thm}
For $\alpha\geq 0$ and $f\in\A$, we have
$$f\in\mathcal{M}(\alpha) \quad \Leftrightarrow \quad F\in \mathcal{S}^{\star},$$
where
\begin{equation}\label{eq5}
F(z)=f(z)\left[\frac{zf'(z)}{f(z)}\right]^{\alpha}.
\end{equation}
\end{thm}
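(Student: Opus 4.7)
The plan is to verify first that $F$ is a well-defined member of $\A$, and then to compute $zF'(z)/F(z)$ by logarithmic differentiation and recognize the resulting expression as precisely the one entering the definition of $\mathcal{M}(\alpha)$. The case $\alpha=0$ is trivial, since then $F=f$ and $\mathcal{M}(0)=\es^{*}$; so I would concentrate on $\alpha>0$.

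I would begin by addressing the analyticity of $F$. When $\alpha$ is not an integer, the factor $\left[zf'(z)/f(z)\right]^{\alpha}$ requires a single-valued analytic branch, which forces $zf'(z)/f(z)$ to be analytic and zero-free on $\D$. Since $f\in\A$, the ratio $f(z)/z$ is analytic at $0$ with value $1$, so the potential singularity of $zf'(z)/f(z)$ at the origin is removable with value $1$. For the forward implication, $f\in\mathcal{M}(\alpha)$ gives $f\in\es$ via the preceding theorem, hence $f(z)\neq 0$ for $z\neq 0$ and $f'(z)\neq 0$ throughout $\D$; in the reverse direction the same nonvanishing is forced by the standing assumption that $F$ is holomorphic on $\D$ and $F(z)\neq 0$ for $z\neq 0$, read through the defining formula. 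Selecting the principal branch normalized so that the power equals $1$ at $z=0$ then yields $F(0)=0$ and $F'(0)=1$, placing $F$ in $\A$.

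The key computation is a one-line logarithmic differentiation. From
\[
\log F(z)=(1-\alpha)\log f(z) + \alpha\log z + \alpha\log f'(z),
\]
differentiation followed by multiplication by $z$ produces
\[
\frac{zF'(z)}{F(z)} \;=\; (1-\alpha)\frac{zf'(z)}{f(z)} + \alpha\left(1+\frac{zf''(z)}{f'(z)}\right).
\]
Because, for $F\in\A$, membership in $\es^{*}$ is equivalent to $\real\frac{zF'(z)}{F(z)}>0$ on $\D$, this identity delivers both directions at once: the right-hand side is precisely the quantity whose real part is required to be positive in the definition of $\mathcal{M}(\alpha)$.

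The main obstacle is therefore not the algebra, which reduces to one logarithmic derivative, but the careful construction of the single-valued analytic branch of $\left[zf'(z)/f(z)\right]^{\alpha}$ on $\D$ together with the verification that $F$ is properly normalized as an element of $\A$, so that the standard analytic characterization of starlikeness applies on the nose.
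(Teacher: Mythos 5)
Your proposal is correct, and it is exactly the classical argument: the logarithmic-differentiation identity $\frac{zF'(z)}{F(z)}=(1-\alpha)\frac{zf'(z)}{f(z)}+\alpha\bigl(1+\frac{zf''(z)}{f'(z)}\bigr)$ combined with the analytic characterization of starlikeness, with the only delicate points (choice of branch of $\left[zf'(z)/f(z)\right]^{\alpha}$ and the normalization $F\in\A$) handled as you describe. Note that the paper itself gives no proof of this theorem --- it is quoted as a known result from \cite{SMO_1973-1,SMO_1974-2,MBS_1999-1,book} --- so there is nothing in the text to compare against, but your route coincides with the standard one in those sources.
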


From the equation \eqref{eq5} we easily derive that for $\alpha>0$ and $f\in \mathcal{M}(\alpha)$
there exists $F\in \mathcal{S}^{\star}$, such that
\begin{equation}\label{eq6}
f(z)=\left(\frac{1}{\alpha}\int_{0}^{z}\frac{F^{\frac{1}{\alpha}}(t)}{t}dt\right)^{\alpha},
\end{equation}
and $f(z)=F(z)$ for $\alpha=0.$ For example, for $F(z)=\frac{z}{(1-e^{i\theta}z)^{2}}=k_{\theta}(z)$ (the Koebe function), from \eqref{eq6} we obtain
\[
k_{\theta}(z, \alpha)=
\left(\frac{1}{\alpha}\int_{0}^{z}t^{\frac{1}{\alpha}-1}(1-e^{i\theta}t)^{-\frac{2}{\alpha}}dt\right)^{\alpha}\quad (\alpha>0),
\]
 and
$k_{\theta}(z, \alpha)=k_{\theta}(z)$ for $\alpha=0$. We note that the function $k_{\theta}(z, \alpha)$ is extremal in many problems related with  the class $\mathcal{M}(\alpha),$ $\alpha \geq0$, in the same way as the Koebe function is     for the class $\mathcal{S}$.

\medskip

Further, from the definition of the class $\mathcal{M}(\alpha)$ we have
\[
(1-\alpha)\frac{zf'(z)}{f(z)}+\alpha\left[1+\frac{zf''(z)}{f'(z)}\right]=\frac{1+\omega(z)}{1-\omega(z)},
\]
where $\omega$ is a Schwartz function ($\omega(0)=0$ and $|\omega(z)|<1$ for all $z\in\D$). Using
$f(z)=z+a_{2}z^{2}+a_{3}z^{3}+\cdots,$ and $\omega(z)=c_{1}z+c_{2}z^{2}+\cdots$, after comparing the coefficients, we receive $(1+\alpha)a_{2}=2c_{1}$ and since $|c_{1}|\leq1$,
\[
|a_{2}|\leq \frac{2}{1+\alpha}\quad (\alpha \geq0).
\]
The previous result is the best possible as the functions $f(z)=k_{0}(z, \alpha)$, when $\alpha>0$, and
$f(z)=\frac{z}{(1-z)^{2}}$ for $\alpha=0$, show.

\medskip

Now we give estimates of the difference $|\gamma_{2}|-|\gamma_{1}|$ for the class of $\alpha$-convex functions for $\alpha$ being any non-negative real number. The obtained upper bound is sharp.

\begin{thm}\label{24-th 1}
Let $f\in\mathcal{M}(\alpha)$, $\alpha\geq0$ and let $\gamma_{1}$ and $\gamma_{2}$ be its initial logarithmic coefficients.
\begin{itemize}
  \item[$(i)$] If $0\leq\alpha\leq\frac{1+\sqrt{3}}{2}$, then
  $$ -\frac{1}{\sqrt{2(\alpha^{2}+3\alpha+1)}}\leq |\gamma_{2}|-|\gamma_{1}|\leq\frac{1}{2(1+2\alpha)}.$$
  \item[$(ii)$] If $ \alpha\geq \frac{1+\sqrt{3}}{2},$  then
  $$ -\frac{6\alpha ^2+10\alpha +3}{4 (2 \alpha +1) (\alpha^2+3\alpha +1)} \leq |\gamma_{2}|-|\gamma_{1}|
\leq\frac{1}{2(1+2\alpha)}.$$
\end{itemize}
Given upper bounds are sharp.
\end{thm}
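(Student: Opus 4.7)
The plan is to parametrize $\mathcal{M}(\alpha)$ through its defining Schwarz function and to express $\gamma_1$ and $\gamma_2$ as explicit rational functions of the initial Schwarz coefficients $c_1,c_2$, then reduce the problem to a one-variable optimization using only the classical inequality $|c_2|\le 1-|c_1|^2$.

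First I would take the defining relation
\[
(1-\alpha)\frac{zf'(z)}{f(z)}+\alpha\left(1+\frac{zf''(z)}{f'(z)}\right)=\frac{1+\omega(z)}{1-\omega(z)},\qquad \omega(z)=c_1z+c_2z^2+\cdots,
\]
and expand both sides through the $z^2$-term. Equating $z$-coefficients reproduces $(1+\alpha)a_2=2c_1$ as already noted in the paper; equating $z^2$-coefficients gives $2(1+2\alpha)a_3-(1+3\alpha)a_2^2=2c_2+2c_1^2$. Combined with $\gamma_1=a_2/2$ and $\gamma_2=\tfrac12(a_3-\tfrac12 a_2^2)$ from \eqref{eq3}, this produces
\[
\gamma_1=\frac{c_1}{1+\alpha},\qquad \gamma_2=\frac{1}{2(1+2\alpha)}\left[c_2+\frac{\alpha^2+4\alpha+1}{(1+\alpha)^2}\,c_1^2\right].
\]

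For the upper bound I would set $t=|c_1|\in[0,1]$, apply the triangle inequality together with $|c_2|\le 1-t^2$, and reduce to showing that
\[
\phi(t):=\frac{1}{2(1+2\alpha)}+\frac{\alpha\,t^2}{(1+2\alpha)(1+\alpha)^2}-\frac{t}{1+\alpha}
\]
attains its maximum on $[0,1]$ at $t=0$. A direct computation shows $\phi'(t)<0$ on $[0,1]$ for every $\alpha\ge 0$ (both $\phi'(0)$ and $\phi'(1)$ are negative, and $\phi'$ is linear in $t$), so the maximum is $\phi(0)=\frac{1}{2(1+2\alpha)}$, attained by the function $f\in\mathcal{M}(\alpha)$ corresponding to $c_1=0$ and $c_2=e^{i\theta}$; this is the promised sharp upper bound.

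For the lower bound I would observe that, for fixed $t=|c_1|$, the smallest possible $|\gamma_2|$ is obtained by choosing $c_2$ of modulus $1-t^2$ anti-parallel to $c_1^2$, giving
\[
|\gamma_2|\ge\max\!\left(\frac{2t^2(\alpha^2+3\alpha+1)-(1+\alpha)^2}{2(1+2\alpha)(1+\alpha)^2},\;0\right).
\]
This splits the minimization at the threshold $t_0=\frac{1+\alpha}{\sqrt{2(\alpha^2+3\alpha+1)}}$. On $[0,t_0]$ the right-hand side is zero, so $|\gamma_2|-|\gamma_1|\ge -t/(1+\alpha)$, minimized at $t=t_0$ with value $-1/\sqrt{2(\alpha^2+3\alpha+1)}$. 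On $[t_0,1]$ the bound is a convex quadratic $q(t)$ whose unconstrained minimizer is $t^\ast=\frac{(1+2\alpha)(1+\alpha)}{2(\alpha^2+3\alpha+1)}$, with $q(t^\ast)=-\frac{6\alpha^2+10\alpha+3}{4(1+2\alpha)(\alpha^2+3\alpha+1)}$; a short check gives $q(t_0)=-1/\sqrt{2(\alpha^2+3\alpha+1)}$, so the two branches glue continuously. The case split in the statement arises from comparing $t^\ast$ with $t_0$: squaring, $t^\ast\ge t_0$ is equivalent to $2\alpha^2-2\alpha-1\ge 0$, i.e.\ $\alpha\ge\frac{1+\sqrt{3}}{2}$. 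Hence in regime $(i)$ the interior critical point lies below $t_0$, $q$ is increasing on $[t_0,1]$, and the overall minimum $-1/\sqrt{2(\alpha^2+3\alpha+1)}$ comes from the boundary; in regime $(ii)$ the interior minimum is active and yields the second claimed bound. The main technical obstacle I anticipate is the coefficient bookkeeping leading to the $\gamma_2$ formula in the first step, after which the rest is a careful but routine exercise in one-variable calculus.
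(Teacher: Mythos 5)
Your proposal is correct and is essentially the paper's own proof in different coordinates: the paper's key inequality \eqref{eq10} is exactly $|c_2|\le 1-|c_1|^2$ rewritten in terms of $(a_2,a_3)$, your decomposition of $\gamma_2$ matches the paper's splitting of $a_3-\tfrac12 a_2^2$, and under the substitution $t=\tfrac{1+\alpha}{2}|a_2|$ your functions $\phi$ and $q$, the threshold $t_0$, the vertex $t^\ast$, and the case split at $\alpha=\tfrac{1+\sqrt3}{2}$ all coincide with the paper's. The only detail worth adding is an explicit extremal function for the upper bound (the paper supplies one via the representation \eqref{eq6}), since asserting that some $f\in\mathcal{M}(\alpha)$ realizes $c_1=0$, $c_2=e^{i\theta}$ deserves a word of justification.
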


\begin{proof}
Let put $$J(f,\alpha;z)=(1-\alpha)\frac{zf'(z)}{f(z)}+\alpha\left[1+\frac{zf''(z)}{f'(z)}\right].$$
Then, by definition of the class $\mathcal{M}(\alpha)$,
\[
\begin{split}
H(f,\alpha;z) &=\frac{1-J(f,\alpha;z)}{1+J(f,\alpha;z)}\\
&=-\frac{1}{2}(1+\alpha)a_{2}z-
\left[(1+2\alpha)a_{3}-\frac{\alpha^{2}+8\alpha+3}{4}a_{2}^{2}\right]z^{2}+\cdots
\end{split}
\]
is a Schwartz function, and by using the inequalities $|c_{1}|\leq1$ and $|c_{2}|\leq 1-|c_{1}|^{2},$
 for $\alpha\geq0$ we receive that $\frac{1}{2}(1+\alpha)|a_{2}|\leq1$ and
$$\left|(1+2\alpha)a_{3}-\frac{\alpha^{2}+8\alpha+3}{4}a_{2}^{2}\right|\leq1-\frac{1}{4}(1+\alpha)^{2}|a_{2}|^{2}.$$
From these inequalities for $\alpha\geq0$ we obtain  $|a_{2}|\leq\frac{2}{1+\alpha}$ and
\begin{equation}\label{eq10}
\left|a_{3}-\frac{\alpha^{2}+8\alpha+3}{4(1+2\alpha)}a_{2}^{2}\right|
\leq\frac{1}{(1+2\alpha)}-\frac{(1+\alpha)^{2}}{4(1+2\alpha)}|a_{2}|^{2}.
\end{equation}
Here, we can note that from \eqref{eq10}, for $\alpha=0$ (the starlike case) we have
$$\left|a_{3}-\frac{3}{4}a_{2}^{2}\right|\leq 1-\frac{1}{4}|a_{2}|^{2},$$
and for  $\alpha=1$ (the convex case):
$$\left|a_{3}-a_{2}^{2}\right|\leq\frac{1}{3} (1-|a_{2}|^{2})$$
(Trimble \cite{trimble}).

\medskip

For the upper bounds (right estimates) in both cases (i) and (ii), using \eqref{eq3} and \eqref{eq10}, we have for
$\alpha\geq0$:
\[
\begin{split}
|\gamma _{2}|-|\gamma_{1}|&=\frac{1}{2}\left|a_3-\frac{1}{2}a_{2}^{2}\right|-\frac{1}{2}|a_{2}|\\
&=\frac{1}{2}\left|\left(a_{3}-\frac{\alpha^{2}+8\alpha+3}{4(1+2\alpha)}a_{2}^{2}\right)+
\frac{\alpha^{2}+4\alpha+1}{4(1+2\alpha)}a_{2}^{2}\right|-\frac{1}{2}|a_{2}|\\
&\leq\frac12\left|a_{3}-\frac{\alpha^{2}+8\alpha+3}{4(1+2\alpha)}a_{2}^{2}\right|+\frac{\alpha^{2}+4\alpha+1}{8(1+2\alpha)}|a_{2}|^{2}
-\frac{1}{2}|a_{2}|\\
&\leq \frac{1}{2(1+2\alpha)}-\frac{(1+\alpha)^{2}}{8(1+2\alpha)}|a_{2}|^{2}+\frac{\alpha^{2}+4\alpha+1}{8(1+2\alpha)}|a_{2}|^{2}
-\frac{1}{2}|a_{2}|\\
&=\frac{1}{2(1+2\alpha)}+\frac{\alpha}{4(1+2\alpha)}|a_{2}|^{2}-\frac{1}{2}|a_{2}|\\
&\leq\frac{1}{2(1+2\alpha)},
\end{split}
\]
since
$$\frac{\alpha}{4(1+2\alpha)}|a_{2}|^{2}-\frac{1}{2}|a_{2}|\leq0 $$
for $0\leq|a_{2}|\leq\frac{2}{1+\alpha}.$
The result is sharp as the functions (see \eqref{eq6}) defined by
$$f(z)=\left(\frac{1}{\alpha}\int_{0}^{z}t^{\frac{1}{\alpha}-1}(1-t^{2})^{-\frac{1}{\alpha}}dt\right)^{\alpha}
=z+\frac{1}{1+2\alpha}z^{3}+\cdots  \quad (\text{for } \alpha>0),$$
and $f(z)=\frac{z}{1-z^{2}}$ (for $\alpha=0$) show.\\

($i$) Now, for the lower bound in the first case, let $0\leq\alpha\leq\frac{1+\sqrt{3}}{2}$. Then the left hand side of inequality to be proven is equivalent to
$$\frac{1}{2}\left|a_3-\frac{1}{2}a_{2}^{2}\right|-\frac{1}{2}|a_{2}|\geq-\frac{1}{\sqrt{2(\alpha^{2}+3\alpha+1)}},$$
i.e., to
$$\left|a_3-\frac{1}{2}a_{2}^{2}\right|\geq|a_{2}|-\sqrt{\frac{2}{\alpha^{2}+3\alpha+1}}.$$

\medskip

If $0\leq|a_{2}|<\sqrt{\frac{2}{\alpha^{2}+3\alpha+1}}$, then the previous inequality is true.

\medskip

Further, let $\sqrt{\frac{2}{\alpha^{2}+3\alpha+1}}\leq |a_{2}|\leq \frac{2}{1+\alpha}.$ Then using
\eqref{eq3} and \eqref{eq10} we have
\begin{equation}\label{eq 12}
\begin{split}
|\gamma _{2}|-|\gamma_{1}|&=\frac{1}{2}\left|a_3-\frac{1}{2}a_{2}^{2}\right|-\frac{1}{2}|a_{2}|\\
&=\frac{1}{2}\left|\left(a_{3}-\frac{\alpha^{2}+8\alpha+3}{4(1+2\alpha)}a_{2}^{2}\right)+
\frac{\alpha^{2}+4\alpha+1}{4(1+2\alpha)}a_{2}^{2}\right|-\frac{1}{2}|a_{2}|\\
&\geq\frac{\alpha^{2}+4\alpha+1}{8(1+2\alpha)}|a_{2}|^{2}-
\frac12\left|a_{3}-\frac{\alpha^{2}+8\alpha+3}{4(1+2\alpha)}a_{2}^{2}\right|
-\frac{1}{2}|a_{2}|\\
&\geq\frac{\alpha^{2}+4\alpha+1}{8(1+2\alpha)}|a_{2}|^{2}-\frac{1}{2(1+2\alpha)}+\frac{(1+\alpha)^{2}}{8(1+2\alpha)}|a_{2}|^{2}
-\frac{1}{2}|a_{2}|\\
&=\frac{\alpha^{2}+3\alpha+1}{4(1+2\alpha)}|a_{2}|^{2}-\frac{1}{2}|a_{2}|-\frac{1}{2(1+2\alpha)}.
\end{split}
\end{equation}
From the previous relation, it is enough to show that
$$\frac{\alpha^{2}+3\alpha+1}{4(1+2\alpha)}|a_{2}|^{2}-\frac{1}{2}|a_{2}|-\frac{1}{2(1+2\alpha)}
\geq -\frac{1}{\sqrt{2(\alpha^{2}+3\alpha+1)}},$$
which is equivalent to
$$\left[|a_{2}|-\sqrt{\frac{2}{\alpha^{2}+3\alpha+1}}\right]
\left[\frac{\alpha^{2}+3\alpha+1}{2(1+2\alpha)}\left(|a_{2}|+\sqrt{\frac{2}{\alpha^{2}+3\alpha+1}}\right)-1\right]\geq0.$$
The last inequality is indeed true since by assumption $\sqrt{\frac{2}{\alpha^{2}+3\alpha+1}}\leq |a_{2}|\leq \frac{2}{1+\alpha}$ and
$$\frac{\alpha^{2}+3\alpha+1}{2(1+2\alpha)}\left(|a_{2}|+\sqrt{\frac{2}{\alpha^{2}+3\alpha+1}}\right)-1
\geq \frac{\sqrt{2(\alpha^{2}+3\alpha+1)}}{1+2\alpha}-1\geq0,$$
since also $0\leq\alpha\leq\frac{1+\sqrt{3}}{2}$.

\medskip

Finally, let $\alpha\geq\frac{1+\sqrt{3}}{2}$. Then, using the beginning and end of the relation
\eqref{eq 12} we get
\[
\begin{split}
|\gamma _{2}|-|\gamma_{1}|&
\geq\frac{\alpha^{2}+3\alpha+1}{4(1+2\alpha)}|a_{2}|^{2}-\frac{1}{2}|a_{2}|-\frac{1}{2(1+2\alpha)}\\
&\geq -\frac{6\alpha ^2+10\alpha +3}{4 (2 \alpha +1) (\alpha^2+3\alpha +1)},
\end{split}
\]
since the last function attains its minimum for $|a_{2}|_{0}=\frac{1+2\alpha}{\alpha^{2}+3\alpha+1}$ and since
$$\sqrt{\frac{2}{\alpha^{2}+3\alpha+1)}}\leq |a_{2}|_{0}<\frac{2}{1+\alpha}$$
for $\alpha\geq\frac{1+\sqrt{3}}{2}$.
\end{proof}

\medskip

\begin{cor}
For $\alpha=0$, i.e., for the starlike functions $\mathcal{S}^{\star}$, as for the class
$\mathcal{S}$, we have
$$-\frac{1}{\sqrt{2}}\leq |\gamma_{2}|-|\gamma_{1}|\leq\frac{1}{2}.$$
(see \cite{lecko,OT_2023-3}). Both inequalities are sharp.
For $\alpha=1$, i.e., for the convex function from the class  $\mathcal{K}$, we have
$$-\frac{1}{\sqrt{10}}\leq |\gamma_{2}|-|\gamma_{1}|\leq\frac{1}{6}.$$
(see \cite{OT_2023-3}). The right hand side of the inequality is sharp.
\end{cor}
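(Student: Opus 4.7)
The plan is to deduce the Corollary as a direct specialization of the preceding theorem. First I would verify that both values $\alpha=0$ and $\alpha=1$ fall inside the interval $0\leq\alpha\leq\frac{1+\sqrt{3}}{2}$ treated by case $(i)$: the case $\alpha=0$ is trivial, while $\alpha=1\leq\frac{1+\sqrt{3}}{2}$ reduces to $\sqrt{3}\geq 1$. Having checked this, both bounds are obtained by plugging $\alpha=0$ and $\alpha=1$ into the formulas $-\tfrac{1}{\sqrt{2(\alpha^{2}+3\alpha+1)}}$ and $\tfrac{1}{2(1+2\alpha)}$, which specialize to $-\tfrac{1}{\sqrt{2}},\,\tfrac{1}{2}$ in the starlike case and to $-\tfrac{1}{\sqrt{10}},\,\tfrac{1}{6}$ in the convex case.

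For the sharpness of the upper bounds I would invoke the extremal function furnished by the proof of the theorem, namely the $\alpha$-generalization $\bigl(\tfrac{1}{\alpha}\int_{0}^{z}t^{1/\alpha-1}(1-t^{2})^{-1/\alpha}dt\bigr)^{\alpha}$. At $\alpha=0$ this degenerates to $f(z)=z/(1-z^{2})\in\mathcal{S}^{\star}$, yielding equality in the right-hand estimate and establishing the $\tfrac{1}{2}$ bound for starlike functions. At $\alpha=1$ the same formula collapses to $f(z)=\tfrac{1}{2}\log\tfrac{1+z}{1-z}\in\mathcal{K}$, which attains the value $\tfrac{1}{6}$ in the convex case. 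The sharpness of the lower bound at $\alpha=0$ is already covered by the cited references \cite{lecko,OT_2023-3}, so no further argument is needed there; no sharpness claim is made at $\alpha=1$, in line with the fact that the general lower bound of Theorem \ref{24-th 1} is not asserted to be sharp.

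The main obstacle is essentially non-existent: the proof reduces to a numerical substitution, with the only sliver of care being the verification that the convex value $\alpha=1$ lies within the regime of case $(i)$ rather than case $(ii)$, which is why both corollary estimates share the same simple closed form $-1/\sqrt{2(\alpha^{2}+3\alpha+1)}$.
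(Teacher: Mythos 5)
Your proposal is correct and matches the paper's (implicit) derivation: the corollary is indeed a direct substitution of $\alpha=0$ and $\alpha=1$ into case $(i)$ of Theorem \ref{24-th 1}, with the upper-bound sharpness witnessed by $z/(1-z^2)$ and $\tfrac12\log\tfrac{1+z}{1-z}$ respectively, and the lower-bound sharpness at $\alpha=0$ deferred to the cited references. The only addition you make beyond what the paper writes is the explicit check that $1\leq\tfrac{1+\sqrt{3}}{2}$, which is a worthwhile verification.
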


\medskip

\section{Estimates for the class $\mathcal{G}(\alpha)$}

In the last section we will give estimate of the difference (sharp for the right-hand side) of $|\gamma_{2}|-|\gamma_{1}|$ for the class $\mathcal{G}(\alpha)$.

\begin{thm}
  If $f\in \mathcal{G}(\alpha)$, $0<\alpha\le1$, then
\[ -\frac{\alpha(17-\alpha)}{12(8-\alpha)} \le |\gamma_{2}|-|\gamma_{1}| \le \frac{\alpha}{12}. \]
The right-hand side inequality is sharp.
\end{thm}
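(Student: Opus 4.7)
My plan is to parametrize membership in $\mathcal{G}(\alpha)$ by a Carath\'eodory function, express the two logarithmic coefficients via its Taylor coefficients, and then carry out a two-variable optimization using the Libera--Zlotkiewicz formula $2p_2 = p_1^2 + (4-p_1^2)x$, $|x|\le 1$.

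First I would rewrite the class condition as $\frac{zf''(z)}{f'(z)} = \frac{\alpha}{2}\bigl(1-p(z)\bigr)$, with $p(z)=1+p_1z+p_2z^2+\cdots$ satisfying $\real p>0$. Matching coefficients gives $a_2 = -\frac{\alpha}{4}p_1$ and $a_3 = \frac{\alpha^2}{24}p_1^2 - \frac{\alpha}{12}p_2$, hence by \eqref{eq3},
\[\gamma_1 = -\frac{\alpha}{8}p_1, \qquad \gamma_2 = \frac{\alpha^2}{192}p_1^2 - \frac{\alpha}{24}p_2.\]
Substituting the Libera--Zlotkiewicz identity yields
\[\gamma_2 = \frac{\alpha(\alpha-4)}{192}p_1^2 - \frac{\alpha(4-p_1^2)}{48}x, \qquad |x|\le 1.\]
Setting $t = |p_1| \in [0,2]$, the two summands have moduli $|A(t)| := \alpha(4-\alpha)t^2/192$ and $|B(t)| := \alpha(4-t^2)/48$, balancing exactly when $t^2 = 16/(8-\alpha)$.

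For the upper bound I would use $|\gamma_2| \le |A(t)|+|B(t)| = \alpha(16 - \alpha t^2)/192$. Subtracting $|\gamma_1| = \alpha t/8$, the resulting expression $\alpha(16 - \alpha t^2 - 24t)/192$ is strictly decreasing on $[0,2]$ and attains its maximum $\alpha/12$ at $t=0$. Equality is realized by taking $p(z) = (1+z^2)/(1-z^2)$ (i.e.\ $p_1=0$, $x=1$), which integrates to $f'(z) = (1-z^2)^{\alpha/2}$, giving $f(z) = z - \frac{\alpha}{6}z^3 + \cdots \in \mathcal{G}(\alpha)$.

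For the lower bound I would split on the size of $t$. When $t \le 4/\sqrt{8-\alpha}$, only $|\gamma_2| \ge 0$ is guaranteed, so $|\gamma_2|-|\gamma_1| \ge -\alpha t/8 \ge -\alpha/(2\sqrt{8-\alpha})$. When $t \ge 4/\sqrt{8-\alpha}$, the stronger bound $|\gamma_2| \ge |A(t)| - |B(t)|$ produces the parabolic lower bound $\alpha[(8-\alpha)t^2 - 24t - 16]/192$, minimized at $t_0 = 12/(8-\alpha)$. A direct check places $t_0$ in the admissible range $[4/\sqrt{8-\alpha},2]$ for every $\alpha\in(0,1]$, and evaluating there gives precisely $-\alpha(17-\alpha)/(12(8-\alpha))$. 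Comparing the two regime bounds, the inequality $-\alpha(17-\alpha)/(12(8-\alpha)) \le -\alpha/(2\sqrt{8-\alpha})$ reduces after clearing roots to $(1+\alpha)^2 \ge 0$, so the global lower bound comes from the large-$t$ regime. The main obstacle I anticipate is the bookkeeping across the two regimes and the verification that $t_0$ lies in the correct subinterval; both reduce to elementary polynomial inequalities in $\alpha$.
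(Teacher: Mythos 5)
Your proposal is correct and is essentially the paper's argument in a different dress: the Carath\'eodory parametrization together with the Libera--Zlotkiewicz identity $2p_2=p_1^2+(4-p_1^2)x$ encodes exactly the same constraint as the paper's Schwarz-function inequality $|c_2|\le 1-|c_1|^2$, your splitting $\gamma_2=A(t)+B(t)x$ is the paper's decomposition $a_3-\tfrac12 a_2^2=\bigl(a_3+\tfrac23\tfrac{1-\alpha}{\alpha}a_2^2\bigr)-\tfrac{4-\alpha}{6\alpha}a_2^2$ rewritten in $p_1$, your $t_0=12/(8-\alpha)$ corresponds to the paper's $|a_2|_0=3\alpha/(8-\alpha)$, and the extremal function $f'(z)=(1-z^2)^{\alpha/2}$ is the same. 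The only organizational difference is your two-regime case split for the lower bound, which the paper avoids by applying the reverse triangle inequality uniformly and minimizing a single quadratic over $[0,\alpha/2]$.
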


\begin{proof}
Let $f\in \mathcal{G}(\alpha)$, $0<\alpha\le1$. Then
\[ \real\left[1 - \frac{2}{\alpha}\frac{zf''(z)}{f'(z)}\right] >0, \quad z\in\D. \]
So, for $h(z) = 1 - \frac{2}{\alpha}\frac{zf''(z)}{f'(z)}$, we have $\real h(z)>0$, $z\in\D$, and for $\Phi(z)=\frac{1-h(z)}{1+h(z)}$, $|\Phi(z)|<1$, $z\in\D$, and
\[ \Phi(z) = \frac{zf''(z)}{\alpha f'(z)-zf''(z)} = \frac{2}{\alpha}a_2z + \frac{6}{\alpha}\left( a_3+\frac23\frac{1-\alpha}{\alpha}a_2^2 \right)z^2+\cdots. \]
The last relation implies (since $\Phi(z)|<1$, $z\in\D$) that
\[  \frac{6}{\alpha}\left| a_3+\frac23\frac{1-\alpha}{\alpha}a_2^2 \right| \le 1-\left| \frac{2}{\alpha}a_2 \right|^2, \]
i.e.,
\begin{equation}\label{e-11}
  \left| a_3+\frac23\frac{1-\alpha}{\alpha}a_2^2 \right| \le \frac{1}{6\alpha}   \left(\alpha^2-4|a_2|^2 \right).
\end{equation}

\medskip

Let briefly note that this inequality, for $\alpha=1$ leads to the estimate
\[ |a_3|\le \frac16(1-4|a_2|^2), \]
holding on the class $\mathcal{G}$ and improving (in the case $n=3$) the one given in \cite{opon-2} (Theorem 1) stating that
\begin{equation}\label{A}
|a_n|\le \frac{\alpha}{n(n-1)}\quad n=2,3,\ldots.
\end{equation}

\medskip

Continuing the proof, we use relations \eqref{eq3} and \eqref{e-11} we receive
\[
\begin{split}
|\gamma _{2}|-|\gamma_{1}|&=\frac{1}{2}\left|a_3-\frac{1}{2}a_{2}^{2}\right|-\frac{1}{2}|a_{2}|\\
&=\frac{1}{2}\left|\left(a_{3} + \frac23\frac{1-\alpha}{\alpha}a_{2}^{2}\right) -
\frac{4-\alpha}{6\alpha}a_{2}^{2}\right|-\frac{1}{2}|a_{2}|\\
&\le \frac12 \left| a_3+\frac23\frac{1-\alpha}{\alpha}a_2^2 \right| +\frac{4-\alpha}{12\alpha}|a_2|^2 - \frac12|a_2|\\
&\le\frac{1}{12\alpha}\left(\alpha^2-4|a_2|^2\right) +\frac{4-\alpha}{12\alpha}|a_2|^2 - \frac12|a_2|\\
&= \frac{1}{12\alpha}\left(-\alpha|a_2|^2 -6\alpha|a_2|+\alpha^2\right) \\
&\le \frac{1}{12\alpha}\alpha^2 = \frac{1}{12\alpha}.
\end{split}
\]
This result is sharp for the function $f$ defined by $f'(z)=(1-z^2)^{\alpha/2}=1-\frac{\alpha}{2}z^2+\cdots$, such that $f(z)=z-\frac{\alpha}{6}z^3+\cdots$.

\medskip

For the left-hand side inequality, using \eqref{e-11} and the previous considerations, we receive
\[
\begin{split}
|\gamma _{2}|-|\gamma_{1}|&\ge \frac{1}{2}\left( \frac{4-\alpha}{6\alpha}|a_2|^2-\left|a_3+\frac23\frac{1-\alpha}{\alpha}a_2^2\right|\right)-\frac12|a_2|\\
&\ge \frac{1}{2}\left[ \frac{4-\alpha}{6\alpha}|a_2|^2 - \frac{1}{6\alpha}\left( \alpha^2-4|a_2|^2 \right)\right]-\frac12|a_2|\\
&= \frac{1}{12\alpha}\left[ (8-\alpha)|a_2|^2 - 6\alpha|a_2|-\alpha^2\right] = \varphi(|a_2|),
\end{split}
\]
where $\varphi(t)= \frac{1}{12\alpha}\left[ (8-\alpha)t^2 - 6\alpha t-\alpha^2\right] $. Inequality \eqref{A} for $n=2$ gives $|a_2|\in[0,\alpha/2]$, so after verifying that for $0<\alpha\le1$, $\varphi$ attains its minimal value on $[0,\alpha/2]$ for $t_0=\frac{3\alpha}{8-\alpha}<\frac{\alpha}{2}$, we conclude
\[ |\gamma _{2}|-|\gamma_{1}|\ge \varphi(t_0) = \frac{1}{12\alpha}\left( -\frac{9\alpha^2}{8-\alpha} - \alpha^2\right) =-\frac{\alpha(17-\alpha)}{12(8-\alpha)}  .\]

\medskip

This compeltes the proof.
\end{proof}

For $\alpha=1$ in the previous theorem we receive the following

\begin{cor}
  If $f\in \mathcal{G}\equiv \mathcal{G}(1)$, $0<\alpha\le1$, then
\[ -\frac{4}{21} \le |\gamma_{2}|-|\gamma_{1}| \le \frac{1}{12}. \]
The right-hand side inequality is sharp. The function $f(z)=z-\frac12z^2$ from $\mathcal{G}$, such that $|\gamma_{2}|-|\gamma_{1}| = -\frac{3}{16}=-0.1875\ldots>-\frac{4}{21}=-0.19047\ldots$, implies that the sharp lower bound is close to the obtained one.
\end{cor}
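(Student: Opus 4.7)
The plan is to obtain this corollary as the direct specialization of the preceding theorem at $\alpha=1$. Substituting $\alpha=1$ into the upper bound $\frac{\alpha}{12}$ gives $\frac{1}{12}$, and into the lower bound $-\frac{\alpha(17-\alpha)}{12(8-\alpha)}$ gives $-\frac{16}{84}=-\frac{4}{21}$, which is exactly the claim. Sharpness of the upper bound transfers automatically from the theorem: the extremal $f'(z)=(1-z^2)^{\alpha/2}$ specializes at $\alpha=1$ to $f(z)=z-\frac{1}{6}z^3+\cdots$, where $a_2=0$ and $a_3=-\frac{1}{6}$, so that $|\gamma_1|=0$ and $|\gamma_2|=\frac{1}{12}$ by \eqref{eq3}, achieving equality.

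For the parenthetical remark that $-\frac{4}{21}$ is close to the best possible, I would exhibit $f(z)=z-\frac{1}{2}z^2$ and carry out three short checks. First, membership $f\in\mathcal{G}$: from $f'(z)=1-z$ and $f''(z)=-1$ one gets $1+\frac{zf''(z)}{f'(z)}=\frac{1-2z}{1-z}=2-\frac{1}{1-z}$, and the classical fact $\real\frac{1}{1-z}>\frac12$ on $\D$ forces the real part to lie strictly below $\frac{3}{2}=1+\frac{\alpha}{2}$. Second, by \eqref{eq3} with $a_2=-\frac{1}{2}$ and $a_3=0$, one obtains $|\gamma_1|=\frac{1}{4}$ and $|\gamma_2|=\frac{1}{16}$. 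Third, subtracting gives $|\gamma_2|-|\gamma_1|=-\frac{3}{16}=-0.1875\ldots$, to be compared with $-\frac{4}{21}=-0.19047\ldots$.

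Since the two inequalities fall out of the theorem by substitution and the remark is a one-line membership verification plus a short computation, no genuine obstacle arises. The only mild point of care is the rewriting $\frac{1-2z}{1-z}=2-\frac{1}{1-z}$ in the membership step, which reduces the matter to the standard half-plane image of $z\mapsto 1/(1-z)$.
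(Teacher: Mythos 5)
Your proposal is correct and follows the same route as the paper, which obtains the corollary simply by setting $\alpha=1$ in the preceding theorem; your substitutions, the sharpness check via $f'(z)=(1-z^2)^{1/2}$, and the computation $|\gamma_2|-|\gamma_1|=-\tfrac{3}{16}$ for $f(z)=z-\tfrac12 z^2$ all check out. The only thing you add beyond the paper is the explicit verification that $z-\tfrac12 z^2\in\mathcal{G}$ via $1+\tfrac{zf''(z)}{f'(z)}=2-\tfrac{1}{1-z}$, which the paper leaves implicit and which is a welcome (and correct) detail.
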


\medskip

\end{document}